\newtheorem{lemma}{Lemma}[section]
\newtheorem{theorem}{Theorem}[section]
\newtheorem{remark}{Remark}[section]
\journal{}
\begin{document}

\begin{frontmatter}



\title{Cauchy problem for general time fractional diffusion equation}


\author[Firstaddress]{Chung-Sik Sin \corref{mycorrespondingauthor}}
\cortext[mycorrespondingauthor]{Corresponding author}
\ead{cs.sin@ryongnamsan.edu.kp}

\address[Firstaddress]{Faculty of Mathematics, \textit{\textbf {Kim Il Sung}} University, Ryomyong Street, Pyongyang, Democratic People's Republic of Korea}

\begin{abstract}
In the present work, we consider the Cauchy problem for the time fractional diffusion equation involving the general Caputo-type differential operator proposed by Kochubei.
First, the existence, the positivity and the long time behavior of solutions of the equation without source term are established by using the Fourier analysis technique.
Then, based on the representation of the solution of the inhomogenous  linear ordinary differential equation with the general operator, the similar problems for the diffusion equation with source term are studied.
\end{abstract}

\begin{keyword}
general Caputo-type derivative,  time fractional diffusion equation, Cauchy problem, existence of solution, positivity, long time behavior.
MSC2010: primary 35R11; secondary 26A33,35A01,35B40,35E15,45K05.
\end{keyword}

\end{frontmatter} 


\section{Introduction}
\label{Section 1}
In this paper, we study the general time fractional diffusion equation of the form
\begin{align}
\label{governing_equation}
\mathscr{D}_{(k)} u(t,x)& =\triangle u(t,x)+h(t,x), t>0, x \in \mathbb{R}^n, \\
\label{initial_condition}
&u(0,x)=u_0(x), x \in \mathbb{R}^n.
\end{align}
where  $n\in \{1,2, 3\}$ and  $ \mathscr{D}_{(k)} $  denotes the general Caputo-type fractional differential operator defined by \cite{Kochubei}
\begin{equation}
\nonumber
\mathscr{D}_{(k)} v(t)=\frac{d}{dt}\int_0^t k(t-\tau)v(\tau)d\tau-k(t)v(0).
\end{equation}
Here $ k \in L_1^{loc}(\mathbb{R}_{+})$. 
The operator  $ \mathscr{D}_{(k)} $ stands for the Caputo fractional differential operator when $ k(t)=t^{-\alpha} $ for some $ 0<\alpha<1 $. Moreover, it generalizes  multi-term and distributed order fractional differential operator.\\
The symbol $ \triangle $ means the Laplacian defined by
\begin{equation}
\nonumber
\triangle w(x)=-F^{-1}(|\xi|^2Fw(\xi))(x), w\in H^2(\mathbb{R}^n), x\in \mathbb{R}^n,
\end{equation}
where $ F, F^{-1} $ are respectively Fourier transform and inverse Fourier transform defined by
\begin{align}
\nonumber
&Fw(\xi)=\tilde{w}(\xi)=\frac{1}{(2\pi)^\frac{n}{2}}\int_{\mathbb{R}^n} w(x) e^{-ix\xi} dx,\\
\nonumber
&F^{-1}w(x)=\frac{1}{(2\pi)^\frac{n}{2}} \int_{\mathbb{R}^n} w(\xi) e^{ix\xi} d\xi.
\end{align}
Here $H^2(\mathbb{R}^n)$ denotes the Sobolev space endowed with the norm defined by \cite[Chapter 4]{Demengel}
\begin{equation}
\nonumber
\|w\|_{H^2(\mathbb{R}^n)}=\|(1+|\cdot|^2)\tilde{w}\|_{L^2(\mathbb{R}^n)}.
\end{equation}
The regularities of the source term $ h $ and the initial value $ u_0 $ will be discussed later.

The equation (\ref{governing_equation}) is derived from the continuous time random walk  theory which is one of the powerful tools for describing the anomalous diffusion process of complex materials \cite{Sandev_FCAA}. The anomalous diffusion phenomena have attracted many researchers' attention \cite{Golding_Cox,Weber,Tejedor,Kneller,Stachura}. 
Due to the continuous time random walk  theory, $ u(t,x) $ means the probability density function  to find a walker at position $ x $ at time $ t $.
If the waiting time probability density function of the walker follows the power law asymptotically, we obtain the Caputo time fractional diffusion equation, which is a special type of the equation (\ref{governing_equation}). 
In \cite{Chechkin-Sokolov}, the ultraslow diffusion phenomena which can not be described by employing the usual fractional derivatives were successfully captured by using the distributed order fractional derivative including single-term and multi-term fractional derivatives. 
Since the equation (\ref{governing_equation}) is a generalization of the diffusion equation involving the distributed order fractional derivative, it has the ability to model more complex anomalous diffusion processes. 

The mathematical aspects for the anomalous diffusion equation have been extensively investigated.
In \cite{Mainardi_1996}, the fundamental solutions of Cauchy problem and Signalling problem for the Caputo time fractional diffusion equation without source term were expressed in terms of special functions by using the Laplace transform.
Eidelman and Kochubei investigated the fundamental solution of the Cauchy problem for the diffusion equation with the Caputo fractional derivative in time and a uniformly elliptic operator with variable coefficients in space \cite{Eidelman}.
Kochubei considered the fundamental solution and the classical solution of the ultraslow diffusion equation with the distributed order derivative \cite{Kochubei_2008}.
In \cite{Kochubei}, Kochubei proposed the general Caputo-type differential operator $ \mathscr{D}_{(k)} $ and investigated the LT-solution of the homogenous diffusion equation involving the general operator. In order that a function becomes a  LT-solution, its Laplace transform should be twice continuously differentiable with respect to $x$.  The kernel function of the operator $ \mathscr{D}_{(k)} $ considered in \cite{Kochubei}  satisfies the following conditions.\\
(C1) The Laplace transform $ \hat k $ of $ k $,
\begin{equation}
\nonumber
Lk(s)=\hat k(s)=\int_0^{\infty} k(t) e^{-ts} dt,
\end{equation}
exists for all $ s>0 $, \\
(C2) $ \hat k(s) $ is a Stieltjes function,\\
(C3) $ \hat k(s) \rightarrow 0 $ and $ s \hat k(s) \rightarrow \infty $ as $ s \rightarrow \infty $,\\
(C4) $ \hat k(s) \rightarrow \infty $ and $ s \hat k(s) \rightarrow 0 $ as $ s \rightarrow 0 $.\\
In \cite{Luchko_2010,Jiang,Jin,Li,Luchko-Yamamoto,Liu,Luchko_Yamamoto_2017},  by using the eigenfunction expansion of the symmetric uniformly elliptic operator and the properties of Mittag-Leffler type functions, the initial boundary value problems for single-term, multi-term and distributed order time fractional diffusion equations were studied. In particular, in \cite{Luchko-Yamamoto}, the authors pointed out that the asymptotic behavior of solutions of the Cauchy problem for the general fractional diffusion equation  (\ref{governing_equation}) remains open.

In this work, existence and analytical properties of solutions of the Cauchy problem are investigated. 
In particular, we find a result for the asymptotic behavior of solutions related to the open problem presented in \cite{Luchko-Yamamoto}.  
In the present paper,  we consider the solution which is Fourier transformable with respect to the spatial variable. 
In fact, when the equation (\ref{governing_equation}) is obtained from continuous time random walk  theory, it is supposed that $ u(t,x) $ is not only Laplace transformable with respect to  $ t $ but also Fourier transformable with respect to $ x $.
From the physical view point, it is not neccessary for the solution to be twice continuously differentiable with respect to $ x $.

The organization of the paper is as follows. In Section 2, the general fractional diffusion equation without source term is studied. The existence, the nonnegativity and the long time behavior of the solution are proved. In Section 3, we disuss the equation with source term subjected to zero initial condition.

\section{General time fractional diffusion equation without source term}
\label{Section 2}
\setcounter{section}{2}
\setcounter{equation}{0}\setcounter{theorem}{0}
In this section, the general diffusion equation (\ref{governing_equation}) with $ h=0 $ is investigated.
Throughout this paper, we suppose that the kernel function $k$ satisfies the conditions (C1)-(C4).
\begin{theorem}
	\label{Th.2.1}
Let $h=0$ and $ u_0 \in H^2(\mathbb{R}^n) $.
Then the Cauchy problem (\ref{governing_equation})-(\ref{initial_condition}) has a unique solution
$u\in  C([0,\infty); H^2(\mathbb{R}^n) \cap  L^\infty (\mathbb{R}^n))  $.
The following relations  hold.
\begin{align}
\label{p_L2}
&\|u(t,\cdot)\|_{L^2(\mathbb{R}^n)}\leq \|u_0\|_{L^2(\mathbb{R}^n)}, t \geq 0.\\
\label{p_Mk}
&\|u(t,\cdot)\|_{H^2(\mathbb{R}^n)} \leq \|u_0\|_{H^2(\mathbb{R}^n)}, t \geq 0.\\
\label{p_zero_Mk}
&\lim_{t \to 0}\|u(t,\cdot)-u_0\|_{H^2(\mathbb{R}^n)}=0.\\
\label{p_derivative_L2}
&\bigg\|\frac{\partial^j u(t,\cdot)}{\partial t^j}\bigg\|_{L^2(\mathbb{R}^n)}\leq \bigg(\frac{j}{et}\bigg)^j\|u_0\|_{L^2(\mathbb{R}^n)}, j \in \mathbb{N}, t>0 .
\end{align}
\begin{align}
\label{p_derivative_Mk}
&\bigg\|\frac{\partial^j u(t,\cdot)}{\partial t^j}\bigg\|_{H^2(\mathbb{R}^n)}\leq \bigg(\frac{j}{et}\bigg)^j\|u_0\|_{H^2(\mathbb{R}^n)},j \in \mathbb{N}, t>0 .\\
\label{Dg}
&\|D_{(k)}u(t,\cdot)\|_{L^2(\mathbb{R}^n)}\leq\|u_0\|_{H^2(\mathbb{R}^n)}, t>0.\\
\label{p_infty}
& \|{u(t,\cdot)}\|_{L^\infty (\mathbb{R}^n)}  \leq  K_n \|u_0\|_{L^2(\mathbb{R}^n)}   \bigg(\frac{\|k\|_{L^1(0,t)}}{t}\bigg)^{\frac{n}{4}}, t>0,
\end{align}
where $ K_n $ is a constant depending only on the dimension $n$.
If $  u_0  $ is nonnegative, then the solution is also nonnegative. 
In particular, if $  u_0  $ is bounded, then the solution is also bounded. 
\end{theorem}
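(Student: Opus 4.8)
The plan is to diagonalize the spatial operator by the Fourier transform and reduce the Cauchy problem to a one–parameter family of scalar relaxation equations. Applying $F$ in $x$ to \eqref{governing_equation} with $h=0$ replaces $\triangle$ by multiplication by $-|\xi|^2$, so that $\tilde u(t,\xi)=Fu(t,\cdot)(\xi)$ must solve, for each fixed $\xi$, the scalar problem $\mathscr{D}_{(k)}\tilde u(t,\xi)=-|\xi|^2\tilde u(t,\xi)$ with $\tilde u(0,\xi)=\tilde u_0(\xi)$. Taking the Laplace transform in $t$ and using $L[\mathscr{D}_{(k)}v](s)=s\hat k(s)\hat v(s)-\hat k(s)v(0)$ yields $\tilde u(t,\xi)=\tilde u_0(\xi)\,\mathcal{U}_{|\xi|^2}(t)$, where the relaxation function $\mathcal{U}_\lambda$ is characterized by
\[ L[\mathcal{U}_\lambda](s)=\frac{\hat k(s)}{s\hat k(s)+\lambda}. \]
The whole argument then hinges on a short list of properties of $\mathcal{U}_\lambda$, which I would derive from (C1)--(C4) via the theory of \cite{Kochubei}: $t\mapsto\mathcal{U}_\lambda(t)$ is completely monotone and nonincreasing, $\mathcal{U}_\lambda(0^+)=1$ (by (C3) and the initial-value theorem, since $s\hat k(s)\to\infty$), $0\le\mathcal{U}_\lambda(t)\le1$, and $\mathcal{U}_0\equiv1$. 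I then define $u(t,\cdot)=F^{-1}[\tilde u_0\,\mathcal{U}_{|\cdot|^2}(t)]$; uniqueness is immediate because the Laplace transform of the Fourier transform of any solution is forced to equal $\tilde u_0(\xi)\hat k(s)/(s\hat k(s)+|\xi|^2)$.

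Given these properties, most estimates are Plancherel computations. Since $\mathcal{U}_{|\xi|^2}(t)\le1$ leaves the weight $(1+|\xi|^2)$ untouched, \eqref{p_L2} and \eqref{p_Mk} follow at once, while \eqref{p_zero_Mk} and continuity in $C([0,\infty);H^2(\mathbb{R}^n))$ follow from $\mathcal{U}_\lambda(0^+)=1$, continuity of $\mathcal{U}_\lambda$ in $t$, and dominated convergence with dominating function $(1+|\xi|^2)|\tilde u_0(\xi)|$. For \eqref{p_derivative_L2} and \eqref{p_derivative_Mk} I would use the Bernstein representation $\mathcal{U}_\lambda(t)=\int_0^\infty e^{-rt}\,d\mu_\lambda(r)$ with total mass $\mu_\lambda([0,\infty))=\mathcal{U}_\lambda(0^+)=1$; differentiating under the integral and maximizing $r^je^{-rt}$ over $r\ge0$ at $r=j/t$ gives $|\partial_t^j\mathcal{U}_\lambda(t)|\le(j/(et))^j$, after which Plancherel yields both derivative bounds. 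Finally \eqref{Dg} reduces to $\|\mathscr{D}_{(k)}u(t,\cdot)\|_{L^2(\mathbb{R}^n)}=\|\triangle u(t,\cdot)\|_{L^2(\mathbb{R}^n)}$ together with the elementary pointwise bound $|\xi|^2\mathcal{U}_{|\xi|^2}(t)\le|\xi|^2\le1+|\xi|^2$.

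The substantive new estimate is the $L^\infty$ bound \eqref{p_infty}, and the key is a quantitative decay of $\mathcal{U}_\lambda$ in $\lambda$. Integrating the scalar equation over $(0,t)$ and using $(k*\mathcal{U}_\lambda)(0)=0$ gives
\[ \lambda\int_0^t\mathcal{U}_\lambda(\tau)\,d\tau=\int_0^t k(t-\tau)\big(1-\mathcal{U}_\lambda(\tau)\big)\,d\tau\le\|k\|_{L^1(0,t)}, \]
where I use $k\ge0$ (valid since $\hat k$ is Stieltjes, hence completely monotone) and $0\le\mathcal{U}_\lambda\le1$; as $\mathcal{U}_\lambda$ is nonincreasing, $t\,\mathcal{U}_\lambda(t)\le\int_0^t\mathcal{U}_\lambda(\tau)\,d\tau$, so that
\[ \mathcal{U}_\lambda(t)\le\min\Big(1,\ \tfrac{\|k\|_{L^1(0,t)}}{\lambda t}\Big). \]
Then $\|u(t,\cdot)\|_{L^\infty}\le(2\pi)^{-n/2}\|\tilde u(t,\cdot)\|_{L^1}$, Cauchy--Schwarz, and polar coordinates reduce \eqref{p_infty} to the bound $\int_{\mathbb{R}^n}\min(1,A^2|\xi|^{-4})\,d\xi\le C_nA^{n/2}$ with $A=\|k\|_{L^1(0,t)}/t$; this integral converges exactly because $n\le3$, which is also the range in which $H^2(\mathbb{R}^n)\hookrightarrow L^\infty(\mathbb{R}^n)$, giving $u(t,\cdot)\in L^\infty$ and the $L^\infty$--continuity, in particular at $t=0$.

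For the sign and boundedness statements I would work with the fundamental solution $G(t,\cdot)=(2\pi)^{-n/2}F^{-1}[\mathcal{U}_{|\cdot|^2}(t)]$, so that $u=G(t,\cdot)*u_0$. Since $s\hat k(s)$ is a complete Bernstein function (again because $\hat k$ is Stieltjes), $e^{-\rho s\hat k(s)}$ is completely monotone in $s$, which yields a subordination representation $\mathcal{U}_\lambda(t)=\int_0^\infty e^{-\lambda\rho}\,\Phi(t,\rho)\,d\rho$ with $\Phi\ge0$; hence $\lambda\mapsto\mathcal{U}_\lambda(t)$ is completely monotone, and by Schoenberg's theorem $\xi\mapsto\mathcal{U}_{|\xi|^2}(t)$ is positive definite on $\mathbb{R}^n$, so $G(t,\cdot)\ge0$. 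As $\int_{\mathbb{R}^n}G(t,x)\,dx=\mathcal{U}_0(t)=1$, the kernel $G(t,\cdot)$ is a probability density: $u_0\ge0$ forces $u\ge0$, and $u_0\in L^\infty$ forces $\|u(t,\cdot)\|_{L^\infty}\le\|G(t,\cdot)\|_{L^1}\|u_0\|_{L^\infty}=\|u_0\|_{L^\infty}$. The main obstacle is precisely this block of structural facts about $\mathcal{U}_\lambda$ --- complete monotonicity in $t$ and in $\lambda$, extracted from (C1)--(C4) through Bernstein/Stieltjes function theory and the subordination principle; once they are in hand, every displayed inequality becomes a routine Plancherel or elementary-integral estimate.
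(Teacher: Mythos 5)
Your proposal is correct and follows essentially the same route as the paper: Fourier diagonalization to the scalar relaxation function $Y(t,\xi)$, complete monotonicity and the Bernstein representation for the derivative bounds, the key decay inequality $Y(t,\xi)\le \|k\|_{L^1(0,t)}/(|\xi|^2 t)$ feeding a polar-coordinate integral (convergent precisely for $n\le 3$) for the $L^\infty$ estimate, and subordination plus Bochner/Schoenberg positive definiteness for nonnegativity and the mass-one kernel. The only cosmetic difference is that you obtain the key inequality by integrating the scalar equation over $(0,t)$ and using $k\ge 0$, whereas the paper applies the Young convolution inequality to $\mathscr{D}_{(k)}Y=k*Y'$; both yield the identical bound.
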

\begin{proof}
Applying the Fourier transform to (\ref{governing_equation}) and (\ref{initial_condition}) with respect to the space variable $ x $, we have
\begin{equation}
\label{general_fractional_Laplace_diffusion_Fourier}
\mathscr{D}_{(k)} \tilde{u}(t,\xi) = -|\xi|^2 \tilde{u}(t,\xi), \ \  t>0, \  \xi \in \mathbb{R}^n
\end{equation}
\vskip -3pt \noindent
and
\vskip -13pt
\begin{equation}
\label{initial_condition_Fourier}
\tilde{u}(0,\xi)=\tilde{u}_0(\xi).
\end{equation}
It follows from  Theorem 3.2 in \cite{Sin_2018} that the initial value problem (\ref{general_fractional_Laplace_diffusion_Fourier})-(\ref{initial_condition_Fourier}) has a unique solution in the space of continuous functions.
Let $Y(t,\xi)$ be the solution of the equation (\ref{general_fractional_Laplace_diffusion_Fourier}) with the initial condition $\tilde{u}(0,\xi)=1$.
Then the solution of the problem (\ref{general_fractional_Laplace_diffusion_Fourier})-(\ref{initial_condition_Fourier}) has the  form: $\tilde{u}(t,\xi)=\tilde{u}_0(\xi)Y(t,\xi)$.
By Theorem 2 in \cite{Kochubei}, for $ \xi \in \mathbb{R}^n$, $Y(t,\xi)$ is completely monotone with respect to $ t $. Then we have
\vskip -11pt
\begin{equation}
\nonumber
|\tilde{u}(t,\xi)| \leq |\tilde{u}_0(\xi)|, t>0, \xi \in \mathbb{R}^n.
\end{equation}
It follows from Theorem 5.2 in \cite{Sin_2018} that $ Y(t,\xi)$ continuously depends on $\xi$.
Thus, $ Y(t,\xi)$ is measurable with respect to $ \xi $.  Since $ u_0\in L^2(\mathbb{R}^n) $,  $  \tilde{u}_0 \in L^2(\mathbb{R}^n) $. Then the function $ u(t,\cdot) $ is also in $ L^2(\mathbb{R}^n)$. Moreover, if $ u_0 \in H^2(\mathbb{R}^n) $, then $ u(t,\cdot) $ is also in $ H^2(\mathbb{R}^n) $.
Using the inverse Fourier transform, $ u(t,x) $ is obtained from $ \tilde{u}(t,\xi) $.

\vskip 3pt 

By using the Plancherel theorem, for $ t\geq 0 $, we deduce
\vskip -12pt
\begin{align}
\nonumber
&\|u(t,\cdot)\|_{L^2(\mathbb{R}^n)}\!=\!\|\tilde{u}(t,\cdot)\|_{L^2(\mathbb{R}^n)}\!=\!
\|\tilde{u}_0Y(t,\cdot)\|_{L^2(\mathbb{R}^n)}\!\leq \!\|\tilde{u}_0\|_{L^2(\mathbb{R}^n)}
\!=\!\|u_0\|_{L^2(\mathbb{R}^n)},\\
\nonumber
&\|u(t,\cdot)\|_{ H^2(\mathbb{R}^n)}\!=\!\|(1+|\cdot|^2)\tilde{u}(t,\cdot)\|_{L^2(\mathbb{R}^n)}
\!\leq \!\|(1+|\cdot|^2)\tilde{u}_0\|_{L^2(\mathbb{R}^n)} \!=\! \|u_0\|_{ H^2(\mathbb{R}^n)}.\end{align}
For $ t>0 $,  we estimate
\vskip -10pt
\begin{align}
\nonumber
&\|u(t,\cdot)-u_0\|_{H^2(\mathbb{R}^n)}=\|(1+|\cdot|^2)(\tilde{u}(t,\cdot)-\tilde{u}_0)\|_{L^2(\mathbb{R}^n)}\\
\nonumber
&=\|(1+|\cdot|^2)\tilde{u}_0(1-Y(t,\cdot))\|_{L^2(\mathbb{R}^n)}\leq 2\|u_0\|_{ H^2(\mathbb{R}^n)}.
\end{align}
By the Lebesgue dominated convergence theorem, we obtain
\begin{align}
\nonumber
&\lim_{t \to 0}\|u(t,\cdot)-u_0\|_{H^2(\mathbb{R}^n)}=\lim_{t \to 0} \|(1+|\cdot|^2)(\tilde{u}(t,\cdot)-\tilde{u}_0)\|_{L^2(\mathbb{R}^n)}\\
\nonumber
&=\|(1+|\cdot|^2)\tilde{u}_0\lim_{t \to 0} (1-Y(t,\cdot))\|_{L^2(\mathbb{R}^n)}=0.
\end{align}

By  the L'H\^{o}pital Rule and the property (C4), we can easily see that
\vskip -8pt
\begin{equation}
\label{infinity_property_of_g}
\lim_{t \to \infty} \frac{\|k\|_{L^1(0,t)}}{t}=0.
\end{equation}
Using the Young inequality for convolution, for $ \xi \in \mathbb{R}^n, t>0 $, we have
\begin{align}
\nonumber
&|\xi|^2 t Y(t,\xi) \leq \||\xi|^2 Y(\cdot,\xi) \|_{L^1(0,t)}=\|\mathscr{D}_{(k)} Y(\cdot,\xi) \|_{L^1(0,t)}\\
\nonumber
&\leq \|k\|_{L^1(0,t)}\int_0^t \bigg| \frac{\partial Y(s,\xi)}{\partial s}\bigg| ds= \|k\|_{L^1(0,t)}(1- Y(t,\xi)).
\end{align}
\vskip -3pt \noindent
Then  we obtain
\vskip -13pt
\begin{align}
\label{Xi_Y_inequaliy}
&|\xi|^2  Y(t,\xi) \leq \frac{\|k\|_{L^1(0,t)}}{t}, \ t>0 , \ \xi \in \mathbb{R}^n,\\
\label{Y_inequaliy}
&Y(t,\xi)\leq \frac{1}{1+\frac{t|\xi|^2}{\|k\|_{L^1(0,t)}}}, \ t>0 , \ \xi \in \mathbb{R}^n.
\end{align}
\vskip -4pt \noindent
For $ t>0 $, we have
\vskip -12pt
\begin{equation}
\nonumber
\|Y(t,\cdot)\|_{L^1(\mathbb{R}^n)}\leq \int_{\mathbb{R}^n} \frac{1}{1+\frac{t |\xi|^2}{\|k\|_{L^1(0,t)}}}d\xi.
\end{equation}
\vskip -4pt \noindent
If $ n=1$, then, for $ t>0 $, we deduce
\vskip -11pt
\begin{equation}
\nonumber
\|Y(t,\cdot)\|_{L^1(\mathbb{R})}\leq 2 \int_0^\infty  \frac{1}{1+\frac{t \xi^2}{\|k\|_{L^1(0,t)}}}d\xi=\bigg(\frac{\|k\|_{L^1(0,t)}}{t}\bigg)^{\frac{1}{2}}\pi.
\end{equation}
If $ n\geq1 $ and $ 2p> n $, then,   for $ t>0 $, we estimate
\begin{align}
\nonumber
\|Y(t,\cdot)\|^p_{L^p(\mathbb{R}^n)}&\leq \int_{\mathbb{R}^n} \frac{1}{\bigg(1+\frac{t |\xi|^2}{\|k\|_{L^1(0,t)}}\bigg)^p}d\xi= C_n \int_0^\infty  \frac{r^{n-1}}{\bigg(1+\frac{t r^2}{\|k\|_{L^1(0,t)}}\bigg)^p}dr\\
\nonumber
&\leq C_n \int_0^{\frac{\|k\|^{\frac{1}{2}}_{L^1(0,t)}}{t^{\frac{1}{2}}}}  r^{n-1}dr
+C_n \int_{\frac{\|k\|^{\frac{1}{2}}_{L^1(0,t)}}{t^{\frac{1}{2}}}}^\infty  \frac{r^{n-1}}{\bigg(\frac{t r^{2}}{\|k\|_{L^1(0,t)}}\bigg)^p}dr\\
\nonumber
&\leq \frac{C_n}{n} \bigg(\frac{\|k\|_{L^1(0,t)}}{t}\bigg)^{\frac{n}{2}}+\frac{C_n}{2p-n}\bigg(\frac{\|k\|_{L^1(0,t)}}{t}\bigg)^{\frac{n}{2}},
\end{align}
where $ C_n $ is a constant depending only on $n$.
Then, since $n\in \{1,2, 3\}$,  there exists a constant $ K_n $ depending only on $n$ such that  for $ t>0 $,
\vskip -10pt
\begin{equation}
\label{estimation_of_fundamental_solution}
\|Y(t,\cdot)\|_{L^2(\mathbb{R}^n)}\leq K_n \bigg(\frac{\|k\|_{L^1(0,t)}}{t}\bigg)^{\frac{n}{4}}.
\end{equation}

Now we set
\vskip -13pt
\begin{equation}
\nonumber
A(t,x)=F^{-1}(Y(t,\cdot))(x), \ t>0, \   x \in \mathbb{R}^n.
\end{equation}
Since $  Y(t,\cdot)\in L^2(\mathbb{R}^n) $, $ A(t,\cdot)\in L^2(\mathbb{R}^n) $  for $ t>0$.
Thus
\vskip -10pt
\begin{equation}
\nonumber
u(t,x)=\int_{\mathbb{R}^n}u_0(x-y) A(t,y) dy, \ t>0, \  x\in \mathbb{R}^n.
\end{equation}
Using the Young inequality for convolution and (\ref{estimation_of_fundamental_solution}), for $ t>0 $, we obtain
\begin{align}
\nonumber
& \|{u(t,\cdot)}\|_{L^\infty (\mathbb{R}^n)}  \leq \|u_0\|_{L^2(\mathbb{R}^n)}\|A(t,\cdot)\|_{L^2(\mathbb{R}^n)}
= \|u_0\|_{L^2(\mathbb{R}^n)} \| Y(t,\cdot)\|_{L^2(\mathbb{R}^n)}\\
\nonumber
& \leq K_n \|u_0\|_{L^2(\mathbb{R}^n)}   \bigg(\frac{\|k\|_{L^1(0,t)}}{t}\bigg)^{\frac{n}{4}}.
\end{align}
By using the Bernstein theorem, 
\cite{Pruss}, p.90,  and the fact:
\vskip -10pt
\begin{equation}
\nonumber
\sup_{s\in [0,\infty)}s^je^{-ts}=\bigg(\frac{j}{et}\bigg)^j, \  j \in \mathbb{N},
\end{equation}
we can easily see that
\vskip -10pt
\begin{equation}
\nonumber
\bigg|\frac{\partial^j Y(t,\xi)}{\partial t^j}\bigg|\leq \bigg(\frac{j}{et}\bigg)^j, \ j \in \mathbb{N}, \ t>0,
\  \xi \in \mathbb{R}^n.
\end{equation}
Then, for $j \in \mathbb{N}, t>0 $,  we deduce
\begin{align}
\nonumber
&\bigg\|\frac{\partial^j \tilde{u}(t,\cdot)}{\partial t^j}\bigg\|_{L^2(\mathbb{R}^n)}=\bigg\|\tilde{u}_0\frac{\partial^j Y(t,\cdot)}{\partial t^j}\bigg\|_{L^2(\mathbb{R}^n)}\leq \bigg(\frac{j}{et}\bigg)^j\|\tilde{u}_0\|_{L^2(\mathbb{R}^n)}\\
\nonumber
&=\bigg(\frac{j}{et}\bigg)^j \|u_0\|_{L^2(\mathbb{R}^n)}.
\end{align}
Similarly, we can prove the relation (\ref{p_derivative_Mk}).
For $ t>0 $, we have
\begin{align}
\nonumber
&\|D_{(k)}u(t,\cdot)\|_{L^2(\mathbb{R}^n)}=\|D_{(k)}\tilde{u}(t,\cdot)\|_{L^2(\mathbb{R}^n)}=\||\cdot|^2\tilde{u}(t,\cdot)\|_{L^2(\mathbb{R}^n)}\\
\nonumber
&=\||\cdot|^2\tilde{u}_0Y(t,\cdot)\|_{L^2(\mathbb{R}^n)}\leq\||\cdot|^2\tilde{u}_0\|_{L^2(\mathbb{R}^n)}\leq \|u_0\|_{ H^2(\mathbb{R}^n)}.
\end{align}

It follows from Theorem 6.2  in \cite{Schilling} that the function $ s\hat{k}(s) $ is a complete Bernstein function.
Since the function $ e^{-s\hat{k}(s)} $ is a completely monotone function, by the Bernstein theorem 
\cite{Pruss}, there exists a nondecreasing function $ \theta:[0, \infty) \rightarrow  \mathbb{R} $ such that
\vskip -10pt
\begin{equation}
\nonumber
L^{-1}\{\hat{k}(s)e^{-s\hat{k}(s)} \}(t)=\int_0^t k(t-\tau) d \theta(\tau).
\end{equation}
Define the function $ \psi(t,\tau) $ by
\begin{equation}
\nonumber
\psi(t,\tau)=L^{-1}\{\hat{k}(s)e^{-\tau s\hat{k}(s)} \}(t), t,\tau>0.
\end{equation}
We can easily see that for $ t,\tau>0,$  $ \psi(t,\tau)>0. $
By using the Fourier-Mellin inversion formula, for $ t>0 $, we deduce
\begin{equation}
\nonumber
\int_0^\infty \psi(t,\tau) d\tau= \int_0^\infty \frac{1}{2\pi i} \int_{r-i\infty}^{r+i\infty} \hat{k}(s)e^{st-\tau s\hat{k}(s)} ds d\tau
=\frac{1}{2\pi i} \int_{r-i\infty}^{r+i\infty} \frac{e^{st}}{s} ds=1,
\end{equation}
where $ r>0 $.
We have
\vskip -10pt
\begin{align}
\nonumber
&\int_0^\infty e^{-st} \int_0^\infty  \psi(t,\tau) e^{-|\xi|^2 \tau}d\tau dt
=\hat{k}(s)\int_0^\infty   e^{-\tau s\hat{k}(s)} e^{-|\xi|^2 \tau}d\tau \\
\nonumber
&=\frac{\hat{k}(s)}{s\hat{k}(s)+|\xi|^2}=\hat{ Y}(s,\xi), s>0, \xi \in \mathbb{R}^n.
\end{align}
\vskip -1pt \noindent
It follows from the uniqueness of the Laplace transform that $  Y(t,\xi) $ has the form:
\vskip -13pt
\begin{equation}
\nonumber
Y(t,\xi)=\int_0^{\infty}  \psi(t,\tau) e^{-|\xi|^2\tau}  d\tau, t>0, \xi \in \mathbb{R}^n.
\end{equation}
\vskip -1pt \noindent
By  Theorem 13.14 in \cite{Schilling}, $ |\xi|^2 $ is negative definite and by  Proposition 4.4 in \cite{Schilling}, for $ \tau>0$, $e^{-\tau |\xi|^2}$ is positive definite with respect to $ \xi $.
Then  $Y(t,\xi)$  is positive definite with respect to $ \xi $.
It follows from Bochner's theorem \cite[Theorem 4.14]{Schilling} that there exists a finite nonnegative measure $  \sigma_t $ on $ \mathbb{R}^n $ such that
\vskip -13pt
\begin{equation}
\nonumber
Y(t,\xi)=\int_{\mathbb{R}^n} e^{-i \xi x}\sigma_t (dx)=\tilde{\sigma_t}(\xi), \ t>0, \ \xi \in \mathbb{R}^n.
\end{equation}
\vskip -4pt \noindent
Then we have
\vskip -12pt
\begin{equation}
\nonumber
\sigma_t(\mathbb{R}^n)=\tilde{\sigma_t}(0)=Y(t,0)=\int_0^\infty \psi(t,\tau)d\tau=1, t>0.
\end{equation}
The solution of the Cauchy problem (\ref{governing_equation})-(\ref{initial_condition}) with $ h=0 $ has the form:
\vskip -8pt
\begin{equation}
\nonumber
u(t,x)=\int_{\mathbb{R}^n} u_0(x-y)1(y)\sigma_t(dy).
\end{equation}
Thus, if $ u_0\geq 0 $, then $ u \geq 0 $.
If $u_0$ is bounded, then
\vskip -10pt
\begin{equation}
\nonumber
|u(t,x)|\leq \int_{\mathbb{R}^n} |u_0(x-y)|1(y)\sigma_t(dy) \leq \sup_{x\in \mathbb{R}^n} |u_0(x)| \sigma_t(\mathbb{R}^n)=\sup_{x\in \mathbb{R}^n} |u_0(x)|.
\end{equation}

\end{proof}

We can prove the existence of solutions of the Cauchy problem (\ref{governing_equation})-(\ref{initial_condition}) under the initial condition weaker than Theorem \ref{Th.2.1}. 

\vspace*{-3pt}

\begin{theorem} \label{Th.2.2} 
	Let $h=0$ and $ u_0 \in L^2(\mathbb{R}^n) $.  Then  the Cauchy problem (\ref{governing_equation})-(\ref{initial_condition}) has a unique solution
	$u\in   C([0,\infty); L^2(\mathbb{R}^n))$. \\ 
	Moreover, $u\in  \bigcap\limits_{\epsilon>0} C([\epsilon,\infty); H^2(\mathbb{R}^n)) $ and the following relations  hold$:$
	\begin{align}
	\label{p_L2_1}
	&\|u(t,\cdot)\|_{L^2(\mathbb{R}^n)}\leq \|u_0\|_{L^2(\mathbb{R}^n)}, \ t \geq 0,\\
	\label{p_Mk_1}
	&\|u(t,\cdot)\|_{H^2(\mathbb{R}^n)} \leq \bigg(1+\frac{\|k\|_{L^1(0,t)}}{t}\bigg) \|u_0\|_{L^2(\mathbb{R}^n)}, \ t > 0,\\
	\label{p_zero_Mk_1}
	&\lim_{t \to 0}\|u(t,\cdot)-u_0\|_{L^2(\mathbb{R}^n)}=0,\\
	\label{Dg_1}
	&\|D_{(k)}u(t,\cdot)\|_{L^2(\mathbb{R}^n)}\leq \frac{\|k\|_{L^1(0,t)}}{t} \|u_0\|_{L^2(\mathbb{R}^n)}, \ t>0,\\
	\label{p_infty_1}
	& \|{u(t,\cdot)}\|_{L^\infty (\mathbb{R}^n)}  \leq  K_n    \bigg(\frac{\|k\|_{L^1(0,t)}}{t}\bigg)^{\frac{n}{4}}\|u_0\|_{L^2(\mathbb{R}^n)}, \ t>0,
	\end{align}
	where $ K_n $ is a constant depending only on the dimension $n$.
	If $  u_0  $ is nonnegative, then the solution is also nonnegative.
	If $  u_0  $ is bounded, then the solution is also  bounded.
	If $ u_0 \in L^1(\mathbb{R}^n) \bigcap L^2(\mathbb{R}^n) $, then the solution $u$ of the Cauchy problem (\ref{governing_equation})-(\ref{initial_condition}) satisfies the following inequality
	\vskip -10pt
	\begin{equation}
	\label{p_L2}
	\|{u(t,\cdot)}\|_{L^2 (\mathbb{R}^n)}  \leq  K_n    \bigg(\frac{\|k\|_{L^1(0,t)}}{t}\bigg)^{\frac{n}{4}}\|u_0\|_{L^1(\mathbb{R}^n)}, \  t>0.
	\end{equation}
\end{theorem}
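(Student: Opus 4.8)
The plan is to reduce everything to the Fourier side and reuse the multiplier $Y(t,\xi)$ together with all the pointwise bounds and the Bochner representation already established in the proof of Theorem \ref{Th.2.1}; the only change is that now $\tilde u_0$ merely lies in $L^2(\mathbb{R}^n)$ rather than in the weighted space where $(1+|\cdot|^2)\tilde u_0\in L^2$. As before I set $\tilde u(t,\xi)=\tilde u_0(\xi)Y(t,\xi)$ and take $u(t,\cdot)=F^{-1}(\tilde u(t,\cdot))$. Uniqueness in $C([0,\infty);L^2(\mathbb{R}^n))$ follows because the spatial Fourier transform of any such solution solves the scalar problem (\ref{general_fractional_Laplace_diffusion_Fourier})--(\ref{initial_condition_Fourier}) for a.e.\ $\xi$, which by Theorem 3.2 in \cite{Sin_2018} forces it to equal $\tilde u_0(\xi)Y(t,\xi)$.

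The estimates split into two groups. Those using only $0\le Y\le 1$ and the pointwise inequalities (\ref{Xi_Y_inequaliy}) and (\ref{estimation_of_fundamental_solution}) transfer verbatim from Theorem \ref{Th.2.1}: Plancherel together with $|Y|\le 1$ gives (\ref{p_L2_1}); inequality (\ref{Xi_Y_inequaliy}) gives (\ref{Dg_1}) since $\|D_{(k)}u(t,\cdot)\|_{L^2}=\||\cdot|^2\tilde u_0Y(t,\cdot)\|_{L^2}$; and writing $u(t,\cdot)=u_0\ast A(t,\cdot)$ with $A(t,\cdot)=F^{-1}(Y(t,\cdot))\in L^2$, Young's inequality with $\|A(t,\cdot)\|_{L^2}=\|Y(t,\cdot)\|_{L^2}$ and (\ref{estimation_of_fundamental_solution}) yields (\ref{p_infty_1}), while the same convolution read through Young's inequality for $L^1\ast L^2\to L^2$ produces the decay estimate (\ref{p_L2}) when $u_0\in L^1\cap L^2$. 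The nonnegativity and boundedness assertions come from the representation $u(t,\cdot)=u_0\ast\sigma_t$, where $\sigma_t$ is the probability measure with $\widetilde{\sigma_t}=Y(t,\cdot)$ so that $F(u_0\ast\sigma_t)=\tilde u_0Y=\tilde u$: a nonnegative $u_0$ stays nonnegative, and $|u_0\ast\sigma_t|\le\|u_0\|_{L^\infty}\sigma_t(\mathbb{R}^n)=\|u_0\|_{L^\infty}$.

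The genuinely new point is the smoothing: although $u_0\notin H^2(\mathbb{R}^n)$, the factor $Y(t,\cdot)$ lifts the solution into $H^2(\mathbb{R}^n)$ for every $t>0$. Combining $Y\le 1$ with (\ref{Xi_Y_inequaliy}) gives the pointwise bound $(1+|\xi|^2)Y(t,\xi)\le 1+\|k\|_{L^1(0,t)}/t$, which after multiplication by $|\tilde u_0|$ and integration proves (\ref{p_Mk_1}) and shows $u(t,\cdot)\in H^2(\mathbb{R}^n)$ for each $t>0$. Here I must use that $\|k\|_{L^1(0,t)}/t<\infty$ for $t>0$, which holds because $k\in L^{1}_{loc}(\mathbb{R}_{+})$.

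For the continuity claims I would argue by dominated convergence on the Fourier side, exploiting that $Y(t,\xi)$, being completely monotone in $t$, is continuous on $(0,\infty)$ and tends to $Y(0,\xi)=1$ as $t\to 0$. For the $L^2$-continuity on $[0,\infty)$ I dominate $|\tilde u_0(\xi)|^2|Y(t,\xi)-Y(t',\xi)|^2\le 4|\tilde u_0(\xi)|^2\in L^1$, which simultaneously yields (\ref{p_zero_Mk_1}). For the $H^2$-continuity on each $[\epsilon,\infty)$ I use that $M_{\epsilon,T}:=\sup_{t\in[\epsilon,T]}\|k\|_{L^1(0,t)}/t$ is finite (by monotonicity and continuity of $t\mapsto\|k\|_{L^1(0,t)}$), so on $[\epsilon,T]$ one has the uniform bound $(1+|\xi|^2)|Y(t,\xi)-Y(t',\xi)|\le 2(1+M_{\epsilon,T})$ and the integrand is dominated by $4(1+M_{\epsilon,T})^2|\tilde u_0(\xi)|^2\in L^1$; letting $t'\to t$ gives $u\in C([\epsilon,\infty);H^2(\mathbb{R}^n))$ for every $\epsilon>0$. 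The main obstacle is precisely this last step: one must extract the vector-valued continuity from mere pointwise continuity of the multiplier, and the locally uniform finiteness of $\|k\|_{L^1(0,t)}/t$ is exactly what supplies the integrable dominating function needed away from $t=0$.
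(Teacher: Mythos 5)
Your proposal is correct and follows essentially the same route as the paper, which itself only sketches this proof by reduction to Theorem \ref{Th.2.1}: the multiplier $Y(t,\xi)$ with the pointwise bound (\ref{Xi_Y_inequaliy}) for the $H^2$-smoothing estimates (\ref{p_Mk_1}) and (\ref{Dg_1}), Young's convolution inequality together with (\ref{estimation_of_fundamental_solution}) for (\ref{p_infty_1}) and (\ref{p_L2}), and the Bochner probability-measure representation for positivity and boundedness. Your dominated-convergence argument for $C([\epsilon,\infty);H^2(\mathbb{R}^n))$, using the local boundedness of $t\mapsto\|k\|_{L^1(0,t)}/t$ away from $t=0$, supplies a detail the paper leaves implicit and is sound.
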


\begin{proof}
We can prove the result similarly to Theorem \ref{Th.2.1}. 
In particular, the relations (\ref{p_Mk_1}) and (\ref{Dg_1}) are obtained from the inequality (\ref{Xi_Y_inequaliy}) and the estimate (\ref{p_L2}) is proved by the Young inequality for convolution and (\ref{estimation_of_fundamental_solution}).
\end{proof} 

\begin{remark} 
	When $ k(t)=\dfrac{t^{-\alpha}}{\Gamma(1-\alpha)} $, the $ L^2$-decay estimate (\ref{p_L2}) for solution of (\ref{governing_equation})-(\ref{initial_condition})  is
	\begin{equation}
	\label{p_L2_special}
	\|{u(t,\cdot)}\|_{L^2 (\mathbb{R}^n)}  \leq  \frac{K_n}{\Gamma(2-\alpha)}   t^{-\frac{n\alpha}{4}}\|u_0\|_{L^1(\mathbb{R}^n)}, \ \ t>0,
	\end{equation}
	\vskip -4pt \noindent
	which is the same as that from Theorem 4.2 in \cite{Kemppainen_2016}.  In order to use the result of \cite{Kemppainen_2016}, we have to find the function $ l \in L^1_{loc}(0,\infty) $ satisfying the relation
	\begin{equation}
	\nonumber
	\int_0^t k(t-s)l(s)ds=1,\ \ t>0.
	\end{equation}
	In general, it is not easy to find the analytic expression of the function $ l $.
	Meanwhile, the estimate (\ref{p_L2_special}) is also obtained from Theorem 2.22 in \cite{Kemppainen_2017}.
\end{remark} 

\section{General time fractional diffusion equation with source term}
\label{Section 3}
\setcounter{section}{3}
\setcounter{equation}{0}\setcounter{theorem}{0}
In this section, the general diffusion equation (\ref{governing_equation}) with the initial condition $ u(0,x)=0 $ is investigated.

\begin{lemma}
\label{representation_of_solution_0_intial_condition}
Let $ \lambda>0, T>0 $ and $ f \in C[0, T] $.
Then the fractional differential equation
\begin{equation}
\label{nonhomogenous_linear_differential_equation}
\mathscr{D}_{(k)}w(t)+\lambda w(t)=f(t), t>0
\end{equation}
with the initial condition $ w(0)=0 $
has a unique solution in $ C[0,T] $. In particular, the solution has the form
\begin{equation}
\label{nonhomogenous_solution_to_ODE}
w(t)=-\frac{1}{\lambda}\int_0^t v'(t-s)f(s)ds, 
\end{equation}
where $ v(t) $ is the solution of the following initial value problem
\begin{align}
\nonumber
&\mathscr{D}_{(k)}v(t)+\lambda v(t)=0, t>0, \\
\nonumber
&v(0)=1.
\end{align}
\end{lemma}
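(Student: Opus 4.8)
The plan is to derive the closed form \eqref{nonhomogenous_solution_to_ODE} by a Laplace-transform computation, verify it directly via convolution calculus, and settle uniqueness by reduction to the homogeneous problem. Write $*$ for the Laplace convolution $(g_1*g_2)(t)=\int_0^t g_1(t-\tau)g_2(\tau)\,d\tau$. The formal derivation is quick: since $w(0)=0$ one has $L\{\mathscr{D}_{(k)}w\}(s)=s\hat k(s)\hat w(s)$, so the transformed equation gives $\hat w(s)=\hat f(s)/(s\hat k(s)+\lambda)$; and from $v(0)=1$ the homogeneous equation yields $\hat v(s)=\hat k(s)/(s\hat k(s)+\lambda)$, whence $L\{v'\}(s)=s\hat v(s)-1=-\lambda/(s\hat k(s)+\lambda)$. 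Thus $-\tfrac1\lambda L\{v'\}(s)\,\hat f(s)=\hat f(s)/(s\hat k(s)+\lambda)=\hat w(s)$, which is exactly \eqref{nonhomogenous_solution_to_ODE}. This only motivates the formula; the substance of the proof is to confirm rigorously that the function so defined is continuous and solves the problem.

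First I would record the regularity of $v$. By Theorem 2 in \cite{Kochubei} the solution $v$ of the homogeneous problem is completely monotone, so $v$ is smooth on $(0,\infty)$, non-increasing, and $0\le v\le v(0)=1$. Consequently $\int_0^T|v'(s)|\,ds=1-v(T)<\infty$, so $v'\in L^1(0,T)$ and $v$ is absolutely continuous on $[0,T]$. The key observation is that for such $v$ the operator reduces to the Caputo-type form $\mathscr{D}_{(k)}v=\tfrac{d}{dt}(k*v)-k(t)v(0)=k*v'$, because differentiating $(k*v)(t)=\int_0^tk(\tau)v(t-\tau)\,d\tau$ produces a boundary term $k(t)v(0)$ that cancels against $-k(t)v(0)$. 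Combining with the homogeneous equation gives the crucial relation $k*v'=-\lambda v$ on $(0,T]$.

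Next I would define $w:=-\tfrac1\lambda\,v'*f$. Since $v'\in L^1(0,T)$ and $f$ is continuous, hence bounded, on $[0,T]$, the convolution is continuous and $w(0)=0$, so $w\in C[0,T]$. To verify the equation I would compute $k*w=-\tfrac1\lambda\,k*(v'*f)=-\tfrac1\lambda\,(k*v')*f=-\tfrac1\lambda\,(-\lambda v)*f=v*f$, using associativity of convolution and the relation $k*v'=-\lambda v$. Since $w(0)=0$, this gives $\mathscr{D}_{(k)}w=\tfrac{d}{dt}(k*w)=\tfrac{d}{dt}(v*f)$; differentiating $(v*f)(t)=\int_0^t v(t-\tau)f(\tau)\,d\tau$ and using $v(0)=1$ yields $\tfrac{d}{dt}(v*f)=f+v'*f=f-\lambda w$. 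Hence $\mathscr{D}_{(k)}w+\lambda w=f$ on $(0,T]$, as required.

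For uniqueness, suppose $w_1,w_2\in C[0,T]$ both solve the problem; then $\omega:=w_1-w_2$ satisfies $\mathscr{D}_{(k)}\omega+\lambda\omega=0$ with $\omega(0)=0$. Since $\omega\equiv 0$ is a solution of this homogeneous initial value problem, the uniqueness part of Theorem 3.2 in \cite{Sin_2018} (applied with spectral parameter $\lambda>0$ in place of $|\xi|^2$) forces $\omega\equiv 0$, so the solution is unique. I expect the main obstacle to be the rigorous justification of the convolution calculus in the two verification steps: both $k$ and $v'$ may be singular at the origin (for $k(t)=t^{-\alpha}/\Gamma(1-\alpha)$ one has $v'(t)\sim t^{\alpha-1}$), so the differentiation under the integral and the associativity $k*(v'*f)=(k*v')*f$ must be obtained from Fubini and the absolute continuity of $v$ rather than by naive pointwise differentiation; the local integrability $k\in L^1_{loc}(\mathbb{R}_{+})$ and $v'\in L^1(0,T)$ together with the boundedness of $f$ are exactly what make these manipulations legitimate.
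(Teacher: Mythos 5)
Your proposal is correct and follows essentially the same route as the paper: the paper verifies the formula by computing $\frac{d}{dt}(k*w)$, swapping the order of integration to recognize $(k*v')*f$, invoking the homogeneous equation $k*v'=\mathscr{D}_{(k)}v=-\lambda v$, and then differentiating $v*f$ using $v(0)=1$, which is precisely your convolution-associativity argument; continuity at $t=0$ and uniqueness (via the cited well-posedness results of Sin, 2018) are handled the same way. Your explicit justification that $v'\in L^1(0,T)$ via complete monotonicity, and the Laplace-transform motivation, are welcome additions but do not change the substance of the argument.
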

\begin{proof}
First, First, we prove that the function of the form (\ref{nonhomogenous_solution_to_ODE}) satisfies (\ref{nonhomogenous_linear_differential_equation}). For $ t>0 $, we have
\vskip -12pt
\begin{align}
\nonumber
\mathscr{D}_{(k)}w(t)&=\frac{d}{dt}\int_0^t k(t-s)w(s)ds\\
\nonumber
&=-\frac{1}{\lambda} \frac{d}{dt}\int_0^t \int_0^s k(t-s) v'(s-\tau)f(\tau)d\tau ds 
\nonumber
\end{align}  \begin{align}
&=-\frac{1}{\lambda} \frac{d}{dt}\int_0^t \int_\tau^t k(t-s) v'(s-\tau)dsf(\tau) d\tau\\
\nonumber
&=-\frac{1}{\lambda} \frac{d}{dt}\int_0^t \int_0^{t-\tau} k(t-\tau-r) v'(r)dr f(\tau)  d\tau\\
\nonumber
&=-\frac{1}{\lambda} \frac{d}{dt}\int_0^t  \mathscr{D}_{(k)}v(t-\tau) f(\tau)  d\tau\\
\nonumber
&=-\frac{1}{\lambda} \frac{d}{dt}\int_0^t  (-\lambda)v(t-\tau) f(\tau)  d\tau\\
\nonumber
&=\int_0^t v'(t-\tau) f(\tau)  d\tau+f(t)\\
\nonumber
&=-\lambda w(t)+f(t).
\end{align}
\vskip -4pt \noindent
For $ t>0 $, we estimate
\vskip -12pt
\begin{equation}
\nonumber
|w(t)|\leq -\frac{1}{\lambda} \max_{s\in [0,t]}|f(s)| \int_0^t v'(s)ds=\frac{1}{\lambda} \max_{s\in [0,t]}|f(s)|(1-v(t)),
\end{equation}
\vskip -3pt \noindent
which implies that $ \lim\limits_{t \to 0+} w(t)=0 $. Thus, $ w\in C[0,T] $.
The uniqueness of the solution follows from  Theorem 4.1 in \cite{Sin_2018}.
\end{proof}
In \cite{Sin_2016}, we derived a new property of the two-parameter Mittag-Leffler function to prove the existence results for solutions of the Caputo fractional differential equation.

\begin{remark}
	If  $ f \in L^\infty(0,T) $, then the function $w$ given by (\ref{nonhomogenous_solution_to_ODE}) satisfies the equation (\ref{nonhomogenous_linear_differential_equation}) for almost all $t\in(0,T) $.
\end{remark}

\begin{theorem}
\label{Th.3.1} 
Let $ u_0=0 $ and $  \dfrac{1}{ tk(t)} \in L^1_{loc}(0,\infty) $. Let $ h $ be a function satisfying $ h(t,\cdot)\in L^1(\mathbb{R}^n) $ for all $t \geq 0$ and suppose that there exists a function $ q\in H^2(\mathbb{R}^n)$ such that $ |\tilde{h}(t,\xi)| \leq q(\xi)  $ for $ t \geq 0, \xi \in \mathbb{R}^n $. Then the Cauchy problem (\ref{governing_equation})-(\ref{initial_condition}) has a unique solution
$u\in \bigcap\limits_{T>0} C([0,T]; H^2(\mathbb{R}^n) )$.
The following relations  hold$:$
\begin{align}
\label{u_L2}
&\|u(t,\cdot)\|_{L^2(\mathbb{R}^n)} \leq \|q\|_{L^2(\mathbb{R}^n)} \int_0^t  \frac{1}{ sk(s)} ds, \ t>0,\\
\label{u_H2}
&\|u(t,\cdot)\|_{H^2(\mathbb{R}^n)} \leq  \|q\|_{L^2(\mathbb{R}^n)}\bigg(1+ \int_0^t  \frac{1}{ sk(s)} ds\bigg), \ t>0,\\
\label{u_Dg}
&\|D_{(k)}u(t,\cdot)\|_{L^2(\mathbb{R}^n)}\leq 2\|q\|_{L^2(\mathbb{R}^n)}, \ t>0.
\end{align}
If $ h  $ is nonnegative, then the solution is also nonnegative.
\end{theorem}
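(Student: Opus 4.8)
The plan is to transform the problem in $x$, solve the resulting family of scalar equations by Lemma \ref{representation_of_solution_0_intial_condition}, and then derive every assertion from a single pointwise frequency estimate via Plancherel's theorem. Applying the Fourier transform to (\ref{governing_equation})--(\ref{initial_condition}) with $u_0=0$ turns the problem, for each fixed $\xi\neq0$, into the scalar equation
\begin{equation}
\nonumber
\mathscr{D}_{(k)}\tilde{u}(t,\xi)+|\xi|^2\tilde{u}(t,\xi)=\tilde{h}(t,\xi),\qquad \tilde{u}(0,\xi)=0 .
\end{equation}
This is exactly (\ref{nonhomogenous_linear_differential_equation}) with $\lambda=|\xi|^2>0$ and $f(\cdot)=\tilde{h}(\cdot,\xi)$, and the function $v$ with $v(0)=1$ solving the associated homogeneous equation is $v(t)=Y(t,\xi)$ from the proof of Theorem \ref{Th.2.1}, so Lemma \ref{representation_of_solution_0_intial_condition} gives
\begin{equation}
\nonumber
\tilde{u}(t,\xi)=-\frac{1}{|\xi|^2}\int_0^t \frac{\partial Y(t-s,\xi)}{\partial t}\,\tilde{h}(s,\xi)\,ds .
\end{equation}
Since $Y(\cdot,\xi)$ is completely monotone with $Y(0,\xi)=1$, one has $\frac{\partial Y}{\partial t}\le0$ and $Y\le1$; together with $|\tilde{h}(s,\xi)|\le q(\xi)$ and $\int_0^t\bigl(-\frac{\partial Y(t-s,\xi)}{\partial t}\bigr)ds=1-Y(t,\xi)$ this yields the basic bound
\begin{equation}
\nonumber
|\tilde{u}(t,\xi)|\le q(\xi)\,\frac{1-Y(t,\xi)}{|\xi|^2},\qquad t>0,\ \xi\neq0 .
\end{equation}

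The main obstacle is the sharp frequency inequality
\begin{equation}
\nonumber
\frac{1-Y(t,\xi)}{|\xi|^2}\le \int_0^t\frac{ds}{s\,k(s)} ,
\end{equation}
which is precisely the quantity singled out by the hypothesis $\frac{1}{tk(t)}\in L^1_{loc}(0,\infty)$. To establish it I would start from the subordination representation $Y(t,\xi)=\int_0^\infty\psi(t,\tau)e^{-\tau|\xi|^2}d\tau$ together with $\int_0^\infty\psi(t,\tau)d\tau=1$, both obtained in the proof of Theorem \ref{Th.2.1}. The elementary inequality $1-e^{-x}\le x$ gives $\frac{1-Y(t,\xi)}{|\xi|^2}\le\phi(t):=\int_0^\infty\tau\,\psi(t,\tau)\,d\tau$, and a Laplace computation identifies $L\phi(s)=\frac{1}{s^2\hat{k}(s)}$, so that $\phi(t)=\int_0^t l(\sigma)\,d\sigma$, where $l=L^{-1}\{1/(s\hat{k}(s))\}$ is the resolvent kernel satisfying $k\ast l=1$. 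Because $\hat{k}$ is a Stieltjes function by (C2) and $s\hat{k}(s)$ is a complete Bernstein function, both $k$ and $l$ are completely monotone; hence $\phi$ is nonnegative, increasing and concave with $\phi(0)=0$, and applying the monotonicity of $k$ to $k\ast l=1$ gives $k(\sigma)\phi(\sigma)\le1$. Concavity then yields $\phi(t)=\int_0^t\phi'(\sigma)\,d\sigma\le\int_0^t\frac{\phi(\sigma)}{\sigma}\,d\sigma\le\int_0^t\frac{d\sigma}{\sigma k(\sigma)}$, which closes the estimate; this comparison chain is the heart of the argument.

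With the pointwise bound available, the three norm inequalities follow by multiplying by the relevant symbol and using Plancherel's theorem: (\ref{u_L2}) is immediate, (\ref{u_H2}) uses $(1+|\xi|^2)\frac{1-Y}{|\xi|^2}=\frac{1-Y}{|\xi|^2}+(1-Y)\le\int_0^t\frac{ds}{sk(s)}+1$, and (\ref{u_Dg}) reads $\mathscr{D}_{(k)}\tilde{u}=-|\xi|^2\tilde{u}+\tilde{h}$ off the equation and bounds $|\xi|^2|\tilde{u}|+|\tilde{h}|\le q(\xi)(1-Y)+q(\xi)\le2q(\xi)$. Membership in $\bigcap_{T>0}C([0,T];H^2(\mathbb{R}^n))$, including the initial condition, follows from the dominated convergence theorem, the $t$-independent dominating function $q(\xi)\bigl(1+\int_0^T\frac{ds}{sk(s)}\bigr)\in L^2(\mathbb{R}^n)$ controlling $(1+|\xi|^2)\tilde{u}(t,\xi)$ on $[0,T]$; uniqueness reduces through the Fourier transform to the uniqueness part of Lemma \ref{representation_of_solution_0_intial_condition}. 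Finally, for nonnegativity I would take the Laplace transform in $t$ of the Fourier-transformed equation to obtain $\bigl(s\hat{k}(s)+|\xi|^2\bigr)\widehat{\tilde{u}}(s,\xi)=\widehat{\tilde{h}}(s,\xi)$, write the solution symbol as $\frac{1}{s\hat{k}(s)+|\xi|^2}=\int_0^\infty e^{-\tau s\hat{k}(s)}e^{-\tau|\xi|^2}\,d\tau$, and note that $e^{-\tau s\hat{k}(s)}$ is completely monotone in $s$ because $s\hat{k}(s)$ is a complete Bernstein function; inverting the Laplace transform shows the solution kernel equals $\int_0^\infty\Phi(t,\tau)e^{-\tau|\xi|^2}d\tau$ with $\Phi\ge0$, hence is positive definite in $\xi$, so by Bochner's theorem its inverse Fourier transform is a nonnegative measure and convolution with $h\ge0$ preserves $u\ge0$.
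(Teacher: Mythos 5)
Your proposal is correct and follows the same overall skeleton as the paper — Fourier transform in $x$, the Duhamel representation $\tilde{u}(t,\xi)=-|\xi|^{-2}\int_0^t \partial_s Y(t-s,\xi)\,\tilde h(s,\xi)\,ds$ from Lemma \ref{representation_of_solution_0_intial_condition}, Plancherel for the norm bounds, and the subordination/Bochner argument (with the kernel $L^{-1}\{e^{-\tau s\hat k(s)}\}$ and a nonnegative measure $\Lambda_t$) for positivity, which is essentially verbatim what the paper does. Where you genuinely diverge is in how the central quantity $\int_0^t \frac{ds}{sk(s)}$ enters. The paper proves the pointwise differential inequality $-tk(t)\,\partial_t Y(t,\xi)\le -\mathscr{D}_{(k)}Y(t,\xi)=|\xi|^2 Y(t,\xi)$ (using that $-\partial_t Y$ is nonincreasing by complete monotonicity and that $\int_0^t k\ge tk(t)$ since $k$ is nonincreasing), and feeds this bound on $-|\xi|^{-2}\partial_s Y$ directly into the Duhamel integral together with $Y\le 1$. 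You instead first collapse the integral to $|\tilde u|\le q\,(1-Y)/|\xi|^2$, then control $(1-Y)/|\xi|^2$ uniformly in $\xi$ by $\phi(t)=\int_0^\infty\tau\psi(t,\tau)\,d\tau=\int_0^t l$ via $1-e^{-x}\le x$, where $l=L^{-1}\{1/(s\hat k(s))\}$ is the Sonine conjugate of $k$, and finally compare $\int_0^t l$ with $\int_0^t\frac{d\sigma}{\sigma k(\sigma)}$ using $k(\sigma)\phi(\sigma)\le 1$ and the concavity of $\phi$. Your chain is valid (the existence and complete monotonicity of $l$ under (C1)--(C4) is exactly the object invoked in the paper's Remark 2.1), and it has the aesthetic advantage of isolating a clean $\xi$-uniform bound on $(1-Y)/|\xi|^2$; the paper's route is more elementary in that it needs only $k$ itself and never introduces $l$, and it also yields as a by-product the sharper $L^2$ estimate $\|u(t,\cdot)\|_{L^2}\le K_n\|q\|_{L^2}\int_0^t\|k\|_{L^1(0,s)}^{n/4}s^{-1-n/4}k(s)^{-1}ds$ recorded in the proof. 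One small point to keep in mind for either route: applying Lemma \ref{representation_of_solution_0_intial_condition} requires $\tilde h(\cdot,\xi)\in C[0,T]$, which neither you nor the paper derives explicitly from the stated hypotheses on $h$; this is a shared, minor looseness rather than a defect of your argument.
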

\begin{proof}
Applying the Fourier transform to (\ref{governing_equation}) and (\ref{initial_condition}) with respect to the space variable $ x $, we have
\begin{equation}
\label{source_fractional_diffusion_Fourier}
\mathscr{D}_{(k)} \tilde{u}(t,\xi) = -|\xi|^2 \tilde{u}(t,\xi)+\tilde{h}(t,\xi), \ t>0, \ \xi \in \mathbb{R}^n
\end{equation}
\vskip -3pt \noindent
and
\vskip -13pt
\begin{equation}
\label{zero_initial_condition_Fourier}
\tilde{u}(0,\xi)=0.
\end{equation}
By Lemma \ref{representation_of_solution_0_intial_condition}, for $ \xi \in \mathbb{R}^n $, the solution of the initial value problem (\ref{source_fractional_diffusion_Fourier})-(\ref{zero_initial_condition_Fourier}) has  the  form
\vskip - 12pt
\begin{equation}
\label{with_source_solution_fourier}
\tilde{u}(t,\xi)=-\frac{1}{|\xi|^2}\int_0^t \tilde{h}(s,\xi) \frac{\partial Y(t-s,\xi)}{\partial s} ds, \ \ t>0, \  \xi \in \mathbb{R}^n.
\end{equation}
Here $Y(t,\xi)$ is the solution of the equation (\ref{general_fractional_Laplace_diffusion_Fourier}) with the initial condition $\tilde{u}(0,\xi)=1$.
Then we deduce
\vskip -10pt
\begin{equation}
\label{Z_derivative_estimation}
-tk(t)\frac{\partial Y(t,\xi)}{\partial t} \leq -\mathscr{D}_{(k)} Y(t,\xi) = |\xi|^2 Y(t,\xi).
\end{equation}
From (\ref{with_source_solution_fourier}) and (\ref{Z_derivative_estimation}),  for $t>0,  \xi \in \mathbb{R}^n$, we have
\begin{equation}
\nonumber
|\tilde{u}(t,\xi)|\leq \int_0^t |\tilde{h}(t-s,\xi)| \frac{Y(s,\xi)}{ sk(s)} ds \leq |q(\xi)| \int_0^t  \frac{1}{ sk(s)} ds.
\end{equation}
Then, for $  t>0 $, we have
\vskip -12pt
\begin{equation}
\nonumber
\|u(t,\cdot)\|_{L^2(\mathbb{R}^n)}=\|\tilde{u}(t,\cdot)\|_{L^2(\mathbb{R}^n)} \leq \|q\|_{L^2(\mathbb{R}^n)} \int_0^t  \frac{1}{ sk(s)} ds.
\end{equation}
Meanwhile, it follows from (\ref{with_source_solution_fourier}) that  for $t>0,\  \xi \in \mathbb{R}^n$,
\begin{equation}
\nonumber
|\xi|^2\tilde{u}(t,\xi)=-\int_0^t \tilde{h}(s,\xi) \frac{\partial Y(t-s,\xi)}{\partial s} ds\leq q(\xi).
\end{equation}
Then, for $t>0$, we estimate
\begin{equation}
\nonumber
\||\cdot|^2\tilde{u}(t,\cdot)\|_{L^2(\mathbb{R}^n)}\leq \|q\|_{L^2(\mathbb{R}^n)}.
\end{equation}
Thus we obtain (\ref{u_H2}). Also, using (\ref{source_fractional_diffusion_Fourier}),  for $ t>0 $,  we have
\begin{align}
\nonumber
&\|D_{(k)}u(t,\cdot)\|_{L^2(\mathbb{R}^n)}\!=\!\|D_{(k)}\tilde{u}(t,\cdot)\|_{L^2(\mathbb{R}^n)}\leq \||\cdot|^2\tilde{u}(t,\cdot)\|_{L^2(\mathbb{R}^n)}\!+\!\|\tilde{h}(t,\cdot)\|_{L^2(\mathbb{R}^n)}\\
\nonumber
&\leq 2\|q\|_{L^2(\mathbb{R}^n)}.
\end{align}
It follows from (\ref{with_source_solution_fourier}) and inverse Fourier transform that
\begin{equation}
\nonumber
u(t,x)=\int_0^t \int_{\mathbb{R}^n} h(s,x-y) B (t-s,y) dy ds, t>0,  x\in \mathbb{R}^n,
\end{equation}
where
\vskip -13pt
\begin{equation}
\nonumber
B(t,x)=F^{-1}\bigg(-\frac{1}{|\cdot|^2}\frac{\partial Y(t,\cdot)}{ \partial t}\bigg)(x).
\end{equation}
In fact, it follows from (\ref{Z_derivative_estimation}) that
\begin{equation}
\nonumber
-\frac{1}{|\xi|^2 }\frac{\partial Y(t,\xi)}{ \partial t} \leq \frac{ Y(t,\xi)}{tk(t)}, t>0,  \xi \in \mathbb{R}^n.
\end{equation}
Since $ Y(t,\cdot)\in L^2(\mathbb{R}^n) $ for $ t>0 $,  we have
\begin{equation}
\nonumber
-\frac{1}{|\cdot|^2}\frac{\partial Y(t,\cdot)}{ \partial t} \in L^2(\mathbb{R}^n), t>0.
\end{equation}
Thus, $ B(t, \cdot)\in L^2(\mathbb{R}^n) $ for $ t>0 $.
Moreover, by using (\ref{estimation_of_fundamental_solution}), for $ t>0 $, we obtain
\vskip -12pt
\begin{equation}
\nonumber
\|B(t,\cdot)\|_{L^2(\mathbb{R}^n)}\leq  \frac{\|Y(t,\cdot)\|_{L^2(\mathbb{R}^n)}}{tk(t)}  \leq K_n \frac{\|k\|^{\frac{n}{4}}_{L^1(0,t)}}{t^{1+\frac{n}{4}}k(t)}.
\end{equation}
It follows from the Young inequality for convolution that for $ t>0$,
\begin{equation}
\nonumber
\|u(t,\cdot)\|_{L^2(\mathbb{R}^n)} \!\leq \!\|q\|_{L^2(\mathbb{R}^n)}\! \int_0^t \! \frac{\|Y(s,\cdot)\|_{L^2(R^n)}}{sk(s)} ds \!\leq \! K_n \|q\|_{L^2(\mathbb{R}^n)} \! \int_0^t \! \frac{\|k\|^{\frac{n}{4}}_{L^1(0,s)}}{s^{1+\frac{n}{4}}k(s)} ds,
\end{equation}
which is an estimate sharper than (\ref{u_L2}).

As in the proof of Theorem \ref{Th.2.1}, there exists a nondecreasing function $ \mu:[0, \infty) \rightarrow  \mathbb{R} $ such that
\vskip -10pt
\begin{equation}
\nonumber
e^{-s\hat{k}(s)} =\int_0^\infty e^{-s\tau} d \mu(\tau).
\end{equation}
\vskip -3pt \noindent
Define $ \rho(t,\tau) $ by
\vskip -11pt
\begin{equation}
\nonumber
\rho(t,\tau)=L^{-1}\{e^{-\tau s\hat{k}(s)} \}(t), \ \ t,\tau>0.
\end{equation}
Meanwhile, we deduce
\begin{align}
\nonumber
&\int_0^\infty e^{-st} \int_0^\infty  \rho(t,\tau) e^{-|\xi|^2 \tau}d\tau dt
=\int_0^\infty   e^{-\tau s\hat{k}(s)} e^{-|\xi|^2 \tau}d\tau \\
\nonumber
&=\frac{1}{s\hat{k}(s)+|\xi|^2}=L \bigg(-\frac{1}{|\xi|^2}\frac{\partial Y(t,\xi)}{ \partial t}\bigg)(s),
\ \ s>0, \  \xi \in \mathbb{R}^n.
\end{align}
The uniqueness of the Laplace transform implies that
\begin{equation}
\nonumber
-\frac{1}{|\xi|^2}\frac{\partial Y(t,\xi)}{ \partial t}=\int_0^{\infty}  \rho(t,\tau) e^{-|\xi|^2\tau}  d\tau, \ \ 
t>0, \  \xi \in \mathbb{R}^n.
\end{equation}
Then, as in the proof of Theorem \ref{Th.2.1},  
the function
\begin{equation}
\nonumber
-\frac{1}{|\xi|^2}\frac{\partial Y(t,\xi)}{ \partial t}
\end{equation}
is positive definite with respect to $ \xi $.
Then, by the Bochner theorem \cite[Theorem 4.14]{Schilling}, there exists a finite nonnegative measure
$  \Lambda_t $ on $ \mathbb{R}^n $ such that
\vskip -10pt
\begin{equation}
\nonumber
-\frac{1}{|\xi|^2}\frac{\partial Y(t,\xi)}{ \partial t}=\int_{\mathbb{R}^n} e^{-i \xi x}\Lambda_t (dx)=\tilde{\Lambda}_t(\xi), \ \ t>0, \ \xi \in \mathbb{R}^n.
\end{equation}
The solution of the Cauchy problem (\ref{governing_equation})-(\ref{initial_condition}) with $ u_0=0 $ has the form:
\begin{equation}
\nonumber
u(t,x)=\int_0^t \int_{\mathbb{R}^n} h(t-s,x-y) \Lambda_t(dy) ds, t>0,  x\in \mathbb{R}^n.
\end{equation}
Thus, if $ h\geq 0 $, then $ u \geq 0 $.
\end{proof}

We can prove the existence of solutions of the Cauchy problem (\ref{governing_equation})-(\ref{initial_condition}) under the conditions weaker than Theorem \ref{Th.3.1}. 

\begin{theorem}\label{Th.3.2} 
	Let $ u_0=0 $, $ h \in L^\infty(0,\infty; L^2(\mathbb{R}^n)) $ and suppose that there exists a function $ q\in L^2(\mathbb{R}^n)$ such that $ |\tilde{h}(t,\xi)| \leq q(\xi)  $ for almost all $ t \geq 0, \xi \in \mathbb{R}^n $. Then the Cauchy problem (\ref{governing_equation})-(\ref{initial_condition}) has a unique solution
	$u\in \bigcap\limits_{T>0} C([0,T]; H^2(\mathbb{R}^n))$.
	The following relations  hold$:$
	\begin{align}
	\label{u_L2_2}
	&\|u(t,\cdot)\|_{L^2(\mathbb{R}^n)} \leq t \|h\|_{L^\infty(0,\infty; L^2(\mathbb{R}^n))}, \ t>0,\\
	\label{u_H2_2}
	&\|u(t,\cdot)\|_{H^2(\mathbb{R}^n)} \leq  t \|h\|_{L^\infty(0,\infty; L^2(\mathbb{R}^n))}+\|q\|_{L^2(\mathbb{R}^n)},  \ t>0,\\
	\label{u_Dg_2}
	&\|D_{(k)}u(t,\cdot)\|_{L^2(\mathbb{R}^n)}\leq \|h\|_{L^\infty(0,\infty; L^2(\mathbb{R}^n))}+\|q\|_{L^2(\mathbb{R}^n)}, \text{ a.e. }  t>0.
	\end{align}
	If $ h  $ is nonnegative, then the solution is also nonnegative.
\end{theorem}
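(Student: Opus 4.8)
The plan is to mirror the proof of Theorem~\ref{Th.3.1}, replacing those pointwise-in-$\xi$ controls that relied on $\frac{1}{tk(t)}\in L^1_{loc}$ by controls using only the complete monotonicity of $Y$ and the Bochner representation of its kernel. First I would apply the Fourier transform in $x$ to reduce the problem to the scalar equation (\ref{source_fractional_diffusion_Fourier})--(\ref{zero_initial_condition_Fourier}) for each frozen $\xi$. Since $h\in L^\infty(0,\infty;L^2(\mathbb{R}^n))$ forces $\tilde h(\cdot,\xi)\in L^\infty(0,T)$ for almost every $\xi$, Lemma~\ref{representation_of_solution_0_intial_condition} together with the Remark following it yields the representation (\ref{with_source_solution_fourier}), valid for almost every $t$. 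The decisive structural facts are that $Y(\cdot,\xi)$ is completely monotone, so the kernel appearing in (\ref{with_source_solution_fourier}) is nonnegative, and that the integral of that kernel over $(0,t)$ equals $1-Y(t,\xi)\le 1$.

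The two $H^2$-type bounds follow cleanly from this. Multiplying (\ref{with_source_solution_fourier}) by $|\xi|^2$ removes the singular factor and leaves an integral of $\tilde h$ against a nonnegative kernel; estimating $|\tilde h(s,\xi)|\le q(\xi)$ and using $\int_0^t(\text{kernel})\,dr=1-Y\le1$ gives $\big||\xi|^2\tilde u(t,\xi)\big|\le q(\xi)$, hence $\||\cdot|^2\tilde u(t,\cdot)\|_{L^2}\le\|q\|_{L^2}$. Inserting this into $\|u\|_{H^2}=\|(1+|\cdot|^2)\tilde u\|_{L^2}\le\|\tilde u\|_{L^2}+\||\cdot|^2\tilde u\|_{L^2}$ produces (\ref{u_H2_2}) once (\ref{u_L2_2}) is available, while the equation (\ref{source_fractional_diffusion_Fourier}) and Plancherel give $\|\mathscr{D}_{(k)}u(t,\cdot)\|_{L^2}\le\||\cdot|^2\tilde u(t,\cdot)\|_{L^2}+\|\tilde h(t,\cdot)\|_{L^2}\le\|q\|_{L^2}+\|h\|_{L^\infty(0,\infty;L^2)}$ for a.e.\ $t$, which is (\ref{u_Dg_2}).

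For the positivity statement and the physical-space representation I would repeat the completely-monotone / Bochner argument of Theorem~\ref{Th.3.1} almost verbatim: since $s\hat k(s)$ is a complete Bernstein function, $e^{-\tau s\hat k(s)}$ is completely monotone in $s$, so its inverse Laplace transform $\rho(t,\tau)$ is nonnegative; uniqueness of the Laplace transform identifies the kernel of (\ref{with_source_solution_fourier}) with $\int_0^\infty\rho(t,\tau)e^{-|\xi|^2\tau}\,d\tau$, which is positive definite in $\xi$, and Bochner's theorem \cite[Theorem 4.14]{Schilling} supplies a finite nonnegative measure $\Lambda_t$ whose Fourier transform is that kernel. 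This gives $u(t,x)=\int_0^t\int_{\mathbb{R}^n}h(t-s,x-y)\,\Lambda_s(dy)\,ds$, so $h\ge0$ forces $u\ge0$.

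The remaining and most delicate point is the $L^2$ bound (\ref{u_L2_2}), which I expect to be the main obstacle. Applying Minkowski's integral inequality to the representation above and then the Young convolution inequality against the nonnegative measure $\Lambda_s$ of total mass $\Lambda_s(\mathbb{R}^n)=\tilde\Lambda_s(0)$ gives $\|u(t,\cdot)\|_{L^2}\le\|h\|_{L^\infty(0,\infty;L^2)}\int_0^t\Lambda_s(\mathbb{R}^n)\,ds$, and by monotone convergence $\int_0^t\Lambda_s(\mathbb{R}^n)\,ds=\lim_{\xi\to0}\frac{1-Y(t,\xi)}{|\xi|^2}$. Controlling this time-integrated mass uniformly in $\xi$ is exactly where the dropped hypothesis $\frac{1}{tk(t)}\in L^1_{loc}$ must be compensated, and care is needed to extract the stated linear-in-$t$ factor rather than the weaker quantity $\int_0^t L^{-1}\{1/(s\hat k(s))\}(r)\,dr$ that the convolution structure produces directly. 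Once (\ref{u_L2_2}) is in hand, continuity of $t\mapsto u(t,\cdot)$ into $H^2(\mathbb{R}^n)$ on each $[0,T]$ follows from the Lebesgue dominated convergence theorem with dominant $(1+|\cdot|^2)\,q$ times the uniform-in-$t$ bound on $\frac{1-Y}{|\cdot|^2}$, and uniqueness follows frequency-wise from the uniqueness part of Lemma~\ref{representation_of_solution_0_intial_condition} together with the Plancherel identity.
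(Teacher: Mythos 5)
You have correctly reproduced the parts of the Theorem~\ref{Th.3.1} template that survive the weakened hypotheses: the kernel $-|\xi|^{-2}\partial_tY(t-s,\xi)$ is nonnegative by complete monotonicity and integrates in $s$ over $(0,t)$ to $(1-Y(t,\xi))/|\xi|^2\le|\xi|^{-2}$, which gives $\||\cdot|^2\tilde u(t,\cdot)\|_{L^2}\le\|q\|_{L^2}$; the bound (\ref{u_Dg_2}) then follows from (\ref{source_fractional_diffusion_Fourier}) and Plancherel; and the Bochner argument gives positivity (your pairing $h(t-s,x-y)\,\Lambda_s(dy)$ is the correct one). Since the paper's entire proof of Theorem~\ref{Th.3.2} is the sentence ``We can prove the result similarly to Theorem~\ref{Th.3.1}'', this is also all the comparison that can be made on those points.

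The genuine gap is the one you flag yourself and do not close: the estimate (\ref{u_L2_2}). Your representation yields
\begin{equation}
\nonumber
\|u(t,\cdot)\|_{L^2(\mathbb{R}^n)}\le\|h\|_{L^\infty(0,\infty;L^2(\mathbb{R}^n))}\int_0^t\kappa(r)\,dr,\qquad \kappa=L^{-1}\{1/(s\hat k(s))\},
\end{equation}
because $\sup_{\xi}\bigl(-|\xi|^{-2}\partial_tY(r,\xi)\bigr)=\int_0^\infty\rho(r,\tau)\,d\tau=\kappa(r)$, and this cannot be upgraded to the stated linear-in-$t$ factor: for the Caputo kernel $k(t)=t^{-\alpha}/\Gamma(1-\alpha)$ one has $\int_0^t\kappa(r)\,dr=t^\alpha/\Gamma(1+\alpha)$, and taking $h(t,x)=h_0(x)$ time-independent with $\tilde h_0=q$ a nonnegative bump supported near $\xi=0$ gives
\begin{equation}
\nonumber
\tilde u(t,\xi)=\tilde h_0(\xi)\,\frac{1-E_\alpha(-|\xi|^2t^\alpha)}{|\xi|^2},
\end{equation}
so that $\|u(t,\cdot)\|_{L^2}$ is arbitrarily close to $\frac{t^\alpha}{\Gamma(1+\alpha)}\|h_0\|_{L^2}$, which exceeds $t\|h_0\|_{L^2}$ for all $t<1$. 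Thus (\ref{u_L2_2}), and with it the first summand of (\ref{u_H2_2}), is not merely out of reach of this argument but fails for admissible data, and nothing in the paper's deferred proof repairs this. The correct conclusion obtainable from your approach replaces $t$ by $\int_0^t\kappa(r)\,dr$ (equivalently by $\lim_{\xi\to0}(1-Y(t,\xi))/|\xi|^2$), after which the rest of your proposal --- continuity into $H^2(\mathbb{R}^n)$ by dominated convergence with majorant $(1+|\cdot|^2)^{-1}$-weighted $q$, and frequency-wise uniqueness via Lemma~\ref{representation_of_solution_0_intial_condition} --- does go through.
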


\begin{proof}
We can prove the result similarly to Theorem \ref{Th.3.1}. 
\end{proof}

\section*{References}

\end{document}